%
\input ./style/arxiv-vmsta.cfg
\documentclass[numbers,compress,v1.0.1]{vmsta}

\volume{2}
\issue{3}
\pubyear{2015}
\firstpage{203}
\lastpage{218}
\doi{10.15559/15-VMSTA32}


\startlocaldefs
\newcommand{\rrvert}{\vert}
\newcommand{\llvert}{\vert}
\newcommand{\RR}{\mathbb{R}}
\newcommand{\eps}{\varepsilon}
\newcommand{\F}{\mathcal{F}}
\newcommand{\pr}{\mathsf{P}}
\newcommand{\wnu}{\widetilde{\nu}}

\newtheorem{theorem}{Theorem}[section]

\theoremstyle{definition}

\newtheorem{remark}{Remark}[section]
\newtheorem{example}{Example}[section]

\urlstyle{rm}
\allowdisplaybreaks
\endlocaldefs

\begin{document}
\begin{frontmatter}

\title{Convergence of hitting times for jump-diffusion processes}

\author{\inits{G.}\fnm{Georgiy}\snm{Shevchenko}\corref{}}\email
{zhora@univ.kiev.ua}
\address{Taras Shevchenko National University of Kyiv, Mechanics and
Mathematics Faculty, Volodymyrska 64, 01601 Kyiv, Ukraine}

\markboth{G. Shevchenko}{Convergence of hitting times for
jump-diffusion processes}

\begin{abstract}
We investigate the convergence of hitting times for jump-diffusion
processes. Spe\-cifically, we study a sequence of stochastic differential
equations with jumps. Under reasonable assumptions, we establish the
convergence of solutions to the equations and of the moments when the
solutions hit certain sets.
\end{abstract}

\begin{keyword}
Stochastic differential equation\sep
Poisson measure\sep
jump-diffusion process\sep
stopping time\sep
convergence
\MSC[2010] 60H10\sep60G44\sep60G40
\end{keyword}

\received{16 February 2015}
\revised{7 September 2015}
\accepted{7 September 2015}
\publishedonline{23 September 2015}
\end{frontmatter}

\section{Introduction}\label{sec1}

In this article, we consider a sequence of stochastic differential
equations with jumps
\begin{align*}
X^n(t) &= X^n(0) + \int_0^t
a^n \bigl(s,X^n(s) \bigr) ds + \int_0^t
b^n \bigl(s,X^n(s) \bigr)dW(s)
\\
&\quad+ \int_{0}^{t}\int_{\RR^m}c^n
\bigl(s,X^n(s-),\theta \bigr)\wnu(d\theta,ds), \quad t\ge0, \ n\ge0.
\end{align*}
Here $W$ is a standard Wiener process, $\wnu$ is a compensated Poisson
random measure, and $X^n(0)$ is nonrandom (see Section~\ref{sec:prelim}
for precise assumptions). Assuming that $a^n\to a^0$,
$b^n\to b^0$, $c^n\to c^0$, and $X^n(0)\to X^0(0)$ as $n\to\infty$ in
an~appropriate sense, we are interested in convergence of hitting times
$\tau^n \to\tau^0$, $n\to\infty$, where
\[
\tau^n = \inf \bigl\{t\ge0:\varphi^n
\bigl(t,X^n(t) \bigr)\ge 0 \bigr\}
\]
is the first time when the process $X^n$ hits the set $\mathcal G^n_t =
 \{x:\varphi^n(t,x)\ge0 \}$.

The study is motivated by the following observation. Jump-diffusion
processes are commonly used to model prices of financial assets. When
the parameters of a jump-diffusion process are estimated with the help
of statistical methods, there is an estimation error. Thus, it is
natural to investigate whether the optimal exercise strategies are
close for two jump-diffusion processes with close parameters. Moreover,
we should study particular hitting times since, in the Markovian
setting, the optimal stopping time is the hitting time of the optimal
stopping set.

There is a lot of literature devoted to jump-diffusion processes and
their applications in finance. The book \cite{cont-tankov} gives an
extensive list of references on the subject. The convergence of
stopping times for diffusion and jump-diffusion processes was studied
in \cite{tomashyk-mishura,moroz-tomashyk,tomashyk-shevchenko}. All
these papers are devoted to the one-dimensional case, and the
techniques are different from ours. Here we generalize these results to
the multidimensional case and also relax the assumptions on the
convergence of coefficients. As an auxiliary result of independent
interest, we prove the convergence of solutions under very mild
assumptions on the convergence of coefficients.

\section{Preliminaries and notation}\label{sec:prelim}
Let $(\varOmega,\F,\mathbf{F},\pr)$ be a standard stochastic basis
with filtration $\mathbf{F}=\{\F_t,t\geq0\}$ satisfying the usual
assumptions. Let $\{W(t)=(W_1(t),\dots,W_k(t)),t\geq0\}$ be a
standard Wiener process in $\RR^k$, and $\nu(d\theta,dt)$ be a
Poisson random measure on $\RR^m \times[0,\infty)$. We assume that
$W$ and $\nu$ are compatible with the filtration~$\mathbf{F}$,
that~is, for any $t>s\ge0$ and any $A\in\mathcal B(\mathbb R^m)$ and
$B\in\mathcal B([s,t])$, the increment $W(t)-W(s)$ and the value $\nu
(A\times B)$ are $F_t$-measurable and independent of~$\F_s$.

Assume in addition that $\nu(d\theta,dt)$ is homogeneous, that is,
for all $A\in\mathcal B(\mathbb R^m)$ and $B\in\mathcal B([0,\infty))$,
$\mathsf{E} [\nu(A\times B) ] = \mu(A)\lambda(B)$,
where $\lambda$ is the Lebesgue measure, $\mu$ is a $\sigma$-finite
measure on $\RR^m$ having no atom at zero. Denote by $\wnu$ the
corresponding compensated measure, that is,\ $\wnu(A\times B) = \nu
(A\times B) - \mu(A)\lambda(B)$ for all $A\in\mathcal B(\mathbb
R^m), B\in\mathcal B([0,\infty))$.

For each integer $n\ge0$, consider a stochastic differential equation
in $\RR^d$
\begin{align}
\label{sde-coord} %
X^n_i(t) &=
X^n_i(0) + \int_0^t
a^n_i \bigl(s,X^n(s) \bigr) ds + \sum
_{j=1}^{k}\int_0^t
b^n_{ij} \bigl(s,X^n(s) \bigr)dW_j(s)
\notag
\\
&\quad+ \int_{0}^{t}\int_{\RR^m}c^n_i
\bigl(s,X^n(s-),\theta \bigr)\wnu(d\theta,ds), \quad t\ge0, \ i=1,\dots,d.
\end{align}
In this equation, the initial condition $X^n(0)\in\RR^d$ is
nonrandom, and the coefficients $a^n_i,b^n_{ij}\colon[0,\infty)\times
\RR^d\to\RR$, $c^n_{i}\colon[0,\infty)\times\RR^d\times\RR
^m\to\RR$, $i=1,\dots,d$, $j=1,\dots,k$, are nonrandom and measurable.

In what follows, we abbreviate Eq.~\eqref{sde-coord} as
\begin{align}
\label{sde} %
X^n(t) &= X^n(0)+ \int
_0^t a^n \bigl(s,X^n(s)
\bigr) ds + \int_0^t b^n
\bigl(s,X^n(s) \bigr)dW(s)\notag
\\
&\quad+ \int_{0}^{t}\int_{\RR^m}c^n
\bigl(s,X^n(s-),\theta \bigr)\wnu(d\theta,ds), \quad t\ge0.
\end{align}

For the rest of the article, we adhere to the following notation. By
$\llvert\cdot\rrvert$ we denote the absolute value of a number, the
norm of a
vector, or the operator norm of a matrix, and by $(x,y)$ the scalar
product of vectors $x$ and $y$; $B_k(r) =  \{x\in\RR^k: \llvert
x\rrvert\le r \}$. The symbol $C$ means a generic constant whose value
is not
important and may change from line to line; a constant dependent on
parameters $a,b,c,\dots$ will be denoted by $C_{a,b,c,\dots}$.

The following assumptions guarantee that Eq.~\eqref{sde} has a unique
strong solution.
\begin{itemize}
\item[(A1)] For all $n\ge0$, $T> 0$, $t\in[0,T]$, $x\in\RR^d$,
\[
\bigl\llvert a^n(t,x) \bigr\rrvert^2 + \bigl\llvert
b^n(t,x) \bigr\rrvert^2 + \int_{\RR^m}
\bigl\llvert c^n(t,x,\theta) \bigr\rrvert^2 \mu(d\theta)
\le C_T \bigl(1 + \llvert x\rrvert^2 \bigr).
\]
\item[(A2)] For all $n\ge0$, $T\ge0$, $t\in[0,T]$, $R>0$, and
$x,y\in B_d(R)$
\begin{align*}
&\bigl\llvert a^n(t,x)-a^n(t,y) \bigr
\rrvert^2 + \bigl\llvert b^n(t,x)-b^n(t,y)
\bigr\rrvert^2
\\
&\qquad + \int_{\RR^m} \bigl\llvert c^n(t,x,
\theta)-c^n(t,y,\theta) \bigr\rrvert^2\mu(d\theta)\le
C_{T,R}\llvert x-y\rrvert^2.
\end{align*}
\end{itemize}

Moreover, under these assumptions, for any $T\ge0$, we have the
following estimate:
%
\begin{equation}
\label{momentbounded} \mathsf{E} \Bigl[\sup_{t\in[0,T]} \bigl\llvert
X^n(t) \bigr\rrvert^2 \Bigr] \le C_{T}
\bigl(1+ \bigl\llvert X^n(0) \bigr\rrvert^2 \bigr)
\end{equation}
(see, e.g.,\ \cite[Section~3.1]{situ}). From this estimate it is easy to
see from Eq.~ \eqref{sde} that for all $t,s\in[0,T]$,
%
\begin{equation}
\label{sol-cont} \mathsf{E} \bigl[ \bigl\llvert X^n(t)-X^n(s)
\bigr\rrvert^2 \bigr]\le C_{T} \bigl(1+ \bigl\llvert
X^n(0) \bigr\rrvert^2 \bigr)\llvert t-s\rrvert.
\end{equation}

Now we state the assumptions on the convergence of coefficients of
\eqref{sde}.
\begin{itemize}
\item[(C1)] For all $t\ge0$ and $x\in\RR^d$,
\begin{align*}
&a^n(t,x)\to a^0(t,x), \qquad b^n(t,x) \to
b^0(t,x),
\\
&\int_{\RR^m} \bigl\llvert c^n(t,x,
\theta)-c^0(t,x, \theta) \bigr\rrvert^2\mu(d\theta)\to0,
\quad n\to\infty.
\end{align*}
\item[(C2)] $X^n(0)\to X^0(0)$, $n\to\infty$.
\end{itemize}

\section{Convergence of solutions to stochastic differential equations with
jumps}\label{sec3}
First, we establish a result on convergence of solutions to stochastic
differential equations.
\begin{theorem}\label{ucp-thm}
Let the coefficients of Eq.~(\ref{sde}) satisfy assumptions
{(A1), (A2), (C1)}, and~(C2).
Then, for any $T>0$, we have the convergence in probability
\[
\sup_{t\in[0,T]} \bigl\llvert X^n(t)-X^0(t)
\bigr\rrvert\overset{ \pr} {\longrightarrow}0, \quad n\to\infty.
\]
If additionally the constant in assumption \textup{(A2)} is
independent of $R$, then for any $T>0$,
\[
\mathsf{E} \Bigl[\sup_{t\in[0,T]} \bigl\llvert X^n(t)-X^0(t)
\bigr\rrvert^2 \Bigr] \to0,\quad n\to\infty.
\]
\end{theorem}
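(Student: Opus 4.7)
The plan is to treat both statements via a single $L^2$-estimate that is first derived under the stronger global-Lipschitz hypothesis and then localized to obtain the in-probability statement. Set $\Delta^n(t)=X^n(t)-X^0(t)$. Subtracting~\eqref{sde} written for $n$ and for $0$ and splitting each integrand as
$$
f^n\bigl(s,X^n(s)\bigr)-f^0\bigl(s,X^0(s)\bigr) = \bigl[f^n\bigl(s,X^n(s)\bigr)-f^n\bigl(s,X^0(s)\bigr)\bigr]+\bigl[f^n\bigl(s,X^0(s)\bigr)-f^0\bigl(s,X^0(s)\bigr)\bigr]
$$
for each $f\in\{a,b,c\}$ isolates a \emph{Lipschitz} contribution (involving $\Delta^n$) and a \emph{coefficient-discrepancy} contribution $R^n_f(s)$ that depends only on the fixed limit process $X^0$.

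\textbf{The globally Lipschitz case.} Squaring, applying Doob's maximal inequality to the two martingale parts, and using It\^o's isometry for the Wiener integral together with its analogue for the compensated Poisson integral, I obtain
$$
\mathsf{E}\Bigl[\sup_{s\le t}\bigl|\Delta^n(s)\bigr|^2\Bigr]\le C\Bigl(\bigl|X^n(0)-X^0(0)\bigr|^2+r^n(T)+\int_0^t\mathsf{E}\bigl[\sup_{u\le s}\bigl|\Delta^n(u)\bigr|^2\bigr]ds\Bigr),
$$
where
$$
r^n(T)=\mathsf{E}\int_0^T\Bigl(\bigl|R^n_a(s)\bigr|^2+\bigl|R^n_b(s)\bigr|^2+\int_{\RR^m}\bigl|R^n_c(s,\theta)\bigr|^2\mu(d\theta)\Bigr)ds.
$$
Gronwall's lemma then yields $\mathsf{E}[\sup_{t\le T}|\Delta^n(t)|^2]\le C_T(|X^n(0)-X^0(0)|^2+r^n(T))$. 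The first term vanishes by~(C2). For $r^n(T)$, the integrand tends to $0$ pointwise in $(s,\omega)$ by~(C1), and by~(A1) it is dominated by $4C_T(1+|X^0(s)|^2)$, which is integrable on $[0,T]\times\varOmega$ thanks to the moment bound~\eqref{momentbounded}. Dominated convergence gives $r^n(T)\to 0$, proving the $L^2$-statement.

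\textbf{Localization for the in-probability statement.} Define $\sigma^n_R=\inf\{t\ge 0:|X^n(t)|\ge R\text{ or }|X^n(t-)|\ge R\}$, similarly $\sigma^0_R$, and set $\sigma_R=\sigma^n_R\wedge\sigma^0_R$. On the stochastic interval $[0,\sigma_R)$ all the values $X^n(s),X^n(s-),X^0(s),X^0(s-)$ lie in $B_d(R)$, so the Lipschitz constant $C_{T,R}$ from~(A2) controls the Lipschitz part of the decomposition. Repeating the preceding argument with the processes stopped at $\sigma_R$ (Doob and optional sampling on the martingale parts) yields $\mathsf{E}[\sup_{t\le T\wedge\sigma_R}|\Delta^n(t)|^2]\to 0$. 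The moment bound~\eqref{momentbounded} and Chebyshev give
$$
\mathsf{P}\bigl(\sigma^n_R\le T\bigr)\le R^{-2}C_T\bigl(1+\bigl|X^n(0)\bigr|^2\bigr),
$$
which, thanks to the boundedness of $X^n(0)$ provided by~(C2), tends to $0$ as $R\to\infty$ uniformly in $n$; the same bound applies to $\sigma^0_R$. A standard $\eps$-argument (fix $R$ to make $\mathsf{P}(\sigma_R\le T)$ small uniformly in $n$, then let $n\to\infty$) concludes the convergence in probability.

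\textbf{Main obstacle.} The chief subtlety is the behavior of the jump integral at the localizing time: since $X^n$ may make a large jump precisely at $\sigma^n_R$, I include both $|X^n(t)|$ and $|X^n(t-)|$ in the definition of the stopping time so that the integrand in the Poisson stochastic integral, which is evaluated at $X^n(s-)$, remains controlled by values in $B_d(R)$ on $[0,\sigma^n_R]$. Beyond this point, each ingredient --- BDG-type bounds for the compensated Poisson martingale, the splitting trick, Gronwall, and dominated convergence --- is standard, and the argument reduces to careful bookkeeping.
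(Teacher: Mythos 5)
Your proposal is correct and follows essentially the same route as the paper: the same splitting of each coefficient difference into a Lipschitz part and a discrepancy part evaluated along $X^0$, Doob/It\^o isometry for the martingale terms, Gronwall, dominated convergence via (A1) and \eqref{momentbounded}, and localization by the exit times of $X^n$ and $X^0$ from a large ball combined with a Chebyshev bound on $\pr(\sigma_R\le T)$ that is uniform in $n$ thanks to (C2). The only (harmless) cosmetic differences are the order of presentation (global case first, then localized) and your extra care with $X^n(t-)$ in the definition of the stopping time.
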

\begin{proof}

Denote $\varDelta^n(t)= \sup_{s\in[0,t]} \llvert
X^n(t)-X^0(t)\rrvert$,
$a^{n,m}_s= a^n(s,X^m(s))$, $b^{n,m}_s= b^n(s,X^m(s))$,
$c^{n,m}_s(\theta)= c^n(s,X^m(s-),\theta)$,
\begin{align*}
&I^{n}_a(t) = \int_0^t
a^{n,n}_s ds,\qquad I^{n}_b(t) =
\int_0^t b^{n,n}_s dW(s),
\\
&I^{n}_c(t) = \int_0^t
\int_{\RR^m} c^{n,n}_s(\theta)\tilde\nu(d
\theta,ds).
\end{align*}
It is easy to see that $I^{n}_b$ and $I^{n}_c$ are martingales.

Write
\begin{align*}
\varDelta^n(t)^2 &\le C \Bigl( \bigl\llvert
X^n(0)-X^0(0) \bigr\rrvert^2+ \sup
_{s\in[0,t]} \bigl\llvert I_a^{n}(s)-I_a^{0}(s)
\bigr\rrvert^2
\\
&\quad+ \sup_{s\in[0,t]} \bigl\llvert I_b^{n}(s)-I_b^{0}(s)
\bigr\rrvert^2 + \sup_{s\in[0,t]} \bigl\llvert
I_c^{n}(s)-I_c^{0}(s) \bigr
\rrvert^2 \Bigr).
\end{align*}
For $N\ge1$, define
\[
\sigma^n_N = \inf \bigl\{t\ge0: \bigl\llvert
X^0(t) \bigr\rrvert\vee \bigl\llvert X^n(t) \bigr\rrvert
\ge N \bigr\}
\]
and denote $1_t = \mathbf{1}_{t\le\sigma^n_N}$.
Then
\begin{align*}
\mathsf{E} \bigl[\varDelta^n(t)^2 1_t\bigr]&\le\mathsf{E} \bigl[\varDelta^n \bigl(t\wedge\sigma^n_N \bigr)^2 \bigr] \\
&\le C \biggl(\bigl\llvert X^n(0)-X^0(0) \bigr\rrvert^2 + \sum_{x\in\{a,b,c\}}\mathsf{E} \Bigl[\sup_{s\in[0,t\wedge\sigma ^n_N]} \bigl\llvert I_x^{n}(s)-I_x^{0}(s)\bigr\rrvert ^2 \Bigr] \biggr).
\end{align*}
We estimate
\begin{align}
\label{ia-estimate} %
&\mathsf{E} \Bigl[\sup_{s\in[0,t\wedge\sigma^n_N]} \bigl
\llvert I_a^{n}(s)-I_a^{0}(s)
\bigr\rrvert^2 \Bigr]\le\mathsf{E} \Biggl[\sup_{s\in[0,t]}
\Biggl(\int_0^s \bigl\llvert
a^{n,n}_u-a^{0,0}_u \bigr
\rrvert1_u du \Biggr)^2 \Biggr]\notag
\\
&\quad\le\mathsf{E} \Biggl[ \Biggl(\int_0^t
\bigl\llvert a^{n,n}_u- a^{0,0}_u
\bigr\rrvert 1_u du \Biggr)^2 \Biggr] \le t \int
_0^t \mathsf{E} \bigl[ \bigl\llvert
a^{n,n}_u- a^{0,0}_u \bigr
\rrvert^2 1_u \bigr]du\notag
\\
&\quad\le C_t \int_0^t \bigl(
\mathsf{E} \bigl[ \bigl\llvert a^{n,n}_u-a^{n,0}_u
\bigr\rrvert^2 1_u \bigr] + \mathsf{E} \bigl[ \bigl
\llvert a^{n,0}_u-a^{0,0}_u \bigr
\rrvert^2 1_u \bigr] \bigr)du.
\end{align}
In turn,
\begin{align*}
&\int_0^t \mathsf{E} \bigl[ \bigl\llvert
a^{n,n}_u-a^{n,0}_u \bigr
\rrvert^2 1_u \bigr]du = \int_0^t
\! \mathsf{E} \bigl[ \bigl\llvert a^{n} \bigl(u,X^n(u)
\bigr)-a^{n} \bigl(u,X^0(u) \bigr) \bigr
\rrvert^2 1_u \bigr]du
\\
&\quad\le C_{N,t} \int_0^t \mathsf{E}
\bigl[ \bigl\llvert X^n(u)-X^n(0) \bigr
\rrvert^2 1_u \bigr]du \le C_{N,t} \int
_0^t \mathsf{E} \bigl[\varDelta^n(u)^2
1_u \bigr]du.
\end{align*}
By the Doob inequality and It\^o isometry we obtain
\begin{align*}
\mathsf{E} \Bigl[\sup_{s\in[0,t\wedge\sigma^n_N]} \bigl\llvert I_b^{n}(s)-I_b^{0}(s)
\bigr\rrvert^2 \Bigr] &\le C \mathsf{E} \bigl[ \bigl\llvert
I_b^{n} \bigl(t\wedge\sigma^n_N
\bigr)-I_b^{0} \bigl(t\wedge\sigma^n_N
\bigr) \bigr\rrvert^2 \bigr]
\\
&= C \int_0^t \mathsf{E} \bigl[ \bigl\llvert
b^{n,n}_s- b^{0,0}_s \bigr
\rrvert^21_s \bigr] ds.
\end{align*}
Estimating as in \eqref{ia-estimate}, we arrive at
\begin{align*}
&\int_0^t \mathsf{E} \bigl[ \bigl\llvert b^{n,n}_s - b^{0,0}_s \bigr\rrvert^21_s \bigr] ds\\
&\quad \le C_{N,t} \int_0^t \mathsf{E} \bigl[\varDelta^n(s)^2 1_s \bigr]ds+ C \int_0^t \mathsf{E} \bigl[ \bigl\llvert b^{n,0}_s-b^{0,0}_s \bigr\rrvert^2 1_s \bigr] ds.
\end{align*}
Finally, the Doob inequality yields
\begin{align*}
&\mathsf{E} \Bigl[\sup_{s\in[0,t\wedge\sigma^n_N]} \bigl\llvert I_c^{n}(s)-I_c^{0}(s)
\bigr\rrvert^2 \Bigr] \le C \mathsf{E} \bigl[ \bigl\llvert
I_c^{n} \bigl(t\wedge\sigma^n_N
\bigr)-I_c^{0} \bigl(t\wedge\sigma^n_N
\bigr) \bigr\rrvert^2 \bigr]
\\
&\quad= C \int_0^t \int_{\RR^m}
\mathsf{E} \bigl[ \bigl\llvert c^{n,n}_s(\theta) -
c^{0,0}_s( \theta ) \bigr\rrvert^2
1_s \bigr]\mu(d\theta) ds
\\
&\quad\le C\int_0^t \int_{\RR^m}
\bigl(\mathsf{E} \bigl[ \bigl\llvert c^{n,n}_s(\theta) -
c^{n,0}_s(\theta) \bigr\rrvert^2 1_s
\bigr]+ \mathsf{E} \bigl[ \bigl\llvert c^{n,0}_s(\theta) -
c^{0,0}_s(\theta) \bigr\rrvert^2 1_s
\bigr] \bigr)\mu(d\theta) ds.
\end{align*}
By (A2) we have
\begin{align*}
&C\int_0^t \int_{\RR^m}
\mathsf{E} \bigl[ \bigl\llvert c^{n,n}_s(\theta) -
c^{n,0}_s( \theta) \bigr\rrvert^2
1_s \bigr]\mu (d\theta) ds
\\
&\quad\le C_{N,t}\int_0^t \mathsf{E}
\bigl[ \bigl\llvert X^{n}(s) - X^{0}(s) \bigr
\rrvert^2 1_s \bigr] ds \le C_{N,t}\int
_0^t \mathsf{E} \bigl[\varDelta^{n}(s)^2
1_s \bigr] ds.
\end{align*}
Collecting all estimates, we arrive at the estimate
\begin{align*}
\mathsf{E} \bigl[\varDelta^n(t)^2 1_t
\bigr]&\le C \bigl\llvert X^n(0)-X^0(0) \bigr
\rrvert^2 + C_{N,t} \int_0^t
\mathsf{E} \bigl[\varDelta^n(s)1_s \bigr]ds
\\
&\quad+ C_t \int_0^t \mathsf{E}
\bigl[ \bigl\llvert\tilde a^{n,0}_s-\tilde
a^{0,0}_s \bigr\rrvert^2 1_s \bigr]
ds + C \int_0^t \mathsf{E} \bigl[ \bigl\llvert
b^{n,0}_s-b^{0,0}_s \bigr
\rrvert^2 1_s \bigr] ds
\\
&\quad+ C\int_0^t \int_{\RR^m}
\mathsf{E} \bigl[ \bigl\llvert c^{n,0}_s(\theta) -
c^{0,0}_s( \theta ) \bigr\rrvert^2
1_s \bigr]\mu(d\theta) ds,
\end{align*}
where we can assume without loss of generality that the constants are
nondecreasing in $t$.
The application of the Gronwall lemma leads to\vadjust{\eject}
\begin{align*}
&\mathsf{E} \bigl[\varDelta^n(T)^2 1_T
\bigr]
\\
&\quad\le C_{N,T} \Biggl( \bigl\llvert X^n(0)-X^0(0)
\bigr\rrvert^2 + \int_0^T
\mathsf{E} \bigl[ \bigl\llvert\tilde a^{n,0}_s-\tilde
a^{0,0}_s \bigr\rrvert^2 1_s \bigr]
ds
\\
&\qquad+ \int_0^T\! \mathsf{E} \bigl[ \bigl
\llvert b^{n,0}_s-b^{0,0}_s \bigr
\rrvert^2 1_s \bigr] ds +\! \int_0^T
\int_{\RR^m}\! \mathsf{E} \bigl[ \bigl\llvert
c^{n,0}_s( \theta) - c^{0,0}_s(\theta)
\bigr\rrvert^2 1_s \bigr]\mu(d \theta) ds \Biggr).
\end{align*}
We claim that the right-hand side of the latter inequality vanishes as
$n\to\infty$. Indeed, the integrands are bounded by $C_T(1+\llvert
X(s)\rrvert^2)$ due to (A1) and vanish pointwise due to (C1). Hence, the
convergence of integrals follows from the dominated convergence
theorem. The first term vanishes due to (C2); thus,
\[
\mathsf{E} \bigl[\varDelta^n(T)^2 1_T
\bigr]\to0,\quad n\to \infty.
\]

Now to prove the first statement, for any $\eps>0$, write
\begin{align*}
\pr \bigl(\varDelta^n(T)>\eps \bigr)&\le\frac{1}{\eps^2}\mathsf {E}
\bigl[\varDelta^n(T)^2 1_T \bigr] + \pr
\bigl( \sigma_N^n< T \bigr)
\\
&\le\frac{1}{\eps^2}\mathsf{E} \bigl[\varDelta^n(T)^2
1_T \bigr]+ \pr \Bigl(\sup_{t\in[0,T]} \bigl\llvert
X^n(0) \bigr\rrvert\ge N \Bigr)
\\
&\quad+ \pr \Bigl(\sup_{t\in[0,T]} \bigl\llvert X^0(0)
\bigr\rrvert\ge N \Bigr).
\end{align*}
This implies
\begin{gather*}
\varlimsup_{n\to\infty} \pr \bigl(\varDelta^n(T)>\eps \bigr)\le2
\sup_{n\ge
0}\, \pr \Bigl(\sup_{t\in[0,T]} \bigl\llvert
X^n(0) \bigr\rrvert\ge N \Bigr).
\end{gather*}
By the Chebyshev inequality we have
\begin{gather*}
\varlimsup_{n\to\infty} \pr \bigl(\varDelta^n(T)>\eps \bigr)\le
\frac
{2}{N^2}\sup_{n\ge0} \mathsf{E} \Bigl[\sup
_{t\in[0,T]} \bigl\llvert X^n(0) \bigr
\rrvert^2 \Bigr].
\end{gather*}
Therefore, using \eqref{momentbounded} and letting $N\to\infty$, we
get
\[
\varlimsup_{n\to\infty} P \bigl(\varDelta^n(T)>\eps \bigr)=0,
\]
as desired.

In order to prove the second statement, we repeat the previous
arguments with $\sigma_N^n \equiv T$, getting the estimate
\begin{align*}
\mathsf{E} \bigl[\varDelta^n(T)^2 \bigr]&\le
C_{T} \Biggl( \bigl\llvert X^n(0)-X^0(0) \bigr
\rrvert^2 + \int_0^T \mathsf{E}
\bigl[ \bigl\llvert\tilde a^{n,0}_s-\tilde
a^{0,0}_s \bigr\rrvert ^2 \bigr] ds
\\
&\quad+ \int_0^T \mathsf{E} \bigl[ \bigl
\llvert b^{n,0}_s-b^{0,0}_s \bigr
\rrvert^2 \bigr] ds
\\
&\quad+ \int_0^T \int_{\RR^m}
\mathsf{E} \bigl[ \bigl\llvert c^{n,0}_s(\theta) -
c^{0,0}_s( \theta ) \bigr\rrvert^2 \bigr]\mu(d
\theta) ds \Biggr).
\end{align*}
Hence, we get the required convergence as before, using the dominated
convergence theorem.
\end{proof}

\section{Convergence of hitting times}\label{sec4}

For each $n\ge0$, define the stopping time
%
\begin{equation}
\label{taun} \tau^{n} = \inf \bigl\{t\ge0: \varphi^n
\bigl(t,X^n(t) \bigr)\ge0 \bigr\}
\end{equation}
with the convention $\inf\varnothing= +\infty$; $\varphi^n$ is a
function satisfying certain assumptions to be specified later. In this
section, we study the convergence $\tau^n\to\tau^0$ as $n\to\infty$.

The motivation to study stopping times of the form \eqref{taun} comes
from the financial modeling. Specifically, let a financial market model
be driven by the process $X^n$ solving Eq.~\eqref{sde}, and $q>0$ be a
constant discount factor. Consider the problem of optimal exercise of
an American-type contingent claim with payoff function $f$ and maturity
$T$, that is,\ the maximization problem
\[
\mathsf{E} \bigl[e^{-q\tau}f \bigl(X^n(\tau) \bigr) \bigr]\to
\max,
\]
where $\tau$ is a stopping time taking values in $[0,T]$. Define the
value function
\[
v^n(t,x) = \sup_{\tau\in[t,T]}\mathsf{E}
\bigl[e^{-q(\tau-t)} f \bigl(X^n(\tau) \bigr) \mid
X^n(t)=x \bigr]
\]
as the maximal expected discounted payoff provided that the price
process $X^n$ starts from $x$ at the moment $t$; the supremum is taken
over all stopping times with values in $[t,T]$.

Then it is well known that the minimal optimal stopping time is given as
\[
\tau^{*,n} = \inf \bigl\{t\ge0: v^n \bigl(t,X^n(t)
\bigr) = f \bigl(X^n(t) \bigr) \bigr\},
\]
that is,\ it is the first time when the process $X^n$ hits the so-called
optimal stopping set
\[
\mathcal G^n = \bigl\{(t,x)\in[0,T]\times\RR^d:
v^n(t,x) = f(x) \bigr\}.
\]
Note that $\tau^{*,n} \le T$ since $v(T,x) = g(x)$. Since, obviously,
$v^n(t,x)\ge f(x)$, we may represent $\tau^{*,n}$ in the form \eqref
{taun} with $\varphi^n = f(x)-v^n(t,x)$.

\section{Convergence of hitting times for finite horizon}\label{sec5}
Let $T>0$ be a fixed number playing the role of finite maturity of an
American contingent claim. Let also the stopping times $\tau^n$, $n\ge
0$, be given by \eqref{taun} with $\varphi^n\colon[0,T]\times\RR
^d\to\RR$ satisfying the following assumptions.
\begin{itemize}
\item[(G1)] $\varphi^0\in C^{1}([0,T)\times\RR^d)$, and the
derivative $D_x\varphi^0$ is locally Lipschitz continuous in $x$, that
is,\ for all $t\in[0,T)$, $R>0$, $s\in[0,t]$, and $x,y\in B_d(R)$,
\[
\bigl\llvert D_x \varphi^0(s,x) - D_x
\varphi^0(s,y) \bigr\rrvert\le C_{t,R}\llvert x-y\rrvert.
\]
\item[(G2)] For all $n\ge0$ and $x\in\RR^d$, $\varphi^n(T,x) = 0$.
\item[(G3)] For all $t\in[0,T)$ and $x\in\RR^d$,
%
\begin{equation}
\label{diffusion>0} \bigl\llvert b^0(t,x)^\top D_x
\varphi^0(t,x) \bigr\rrvert> 0.
\end{equation}
\end{itemize}
Here by $b^0(t,x)^\top D_x \varphi^0(t,x)$ we denote the vector in
$\RR^k$ with $j$th coordinate equal to
\[
\sum_{i=1}^{d} b^0_{ij}(t,x)
\partial_{x_i}\varphi^0(t,x), \quad j=1,\dots,k.
\]

\begin{remark}\label{rem:G1G2}
Assumption \eqref{diffusion>0} means that the diffusion is acting
strongly enough toward the border of the set $\mathcal G^0_t :=  \{
x\in\RR^d: \varphi^0(t,x)\le0 \}$. In which situations does this
assumption hold, will be studied elsewhere. Here we just want to remark
that it is more delicate than it might seem. For example, consider the
optimal stopping problem described in the beginning of this section
with $n=0$ in \eqref{sde}. Then, under suitable assumptions (see,
e.g.,\ \cite{pham,zhang}), we have the smooth fit principle: $\partial
_x v^0(t,x) = \partial_x f(x)$ on the boundary of the optimal stopping
set. This means that we cannot set $\varphi^0(t,x) = f(x) - v^0(t,x)$
in order for \eqref{diffusion>0} to hold, contrary to what was
proposed in the beginning of the section.
\end{remark}

We will also assume the locally uniform convergence $\varphi^n\to
\varphi^0$.
\begin{itemize}
\item[(G4)] For all $t\in[0,T)$ and $R>0$,
\[
\sup_{(s,x)\in[0,t]\times B_d(R)} \bigl\llvert\varphi^n(s,x)- \varphi
^0(s,x) \bigr\rrvert\to0, \quad n\to\infty.
\]
\end{itemize}
\begin{remark}
The convergence of value functions in optimal stopping problems usually
holds under fairly mild assumptions on the convergence of coefficients
and payoffs. However, as we explained in Remark~\ref{rem:G1G2}, we
cannot use the value function for $\varphi^n$. This means that we
should find a function $\varphi^n$ defining $\mathcal G$ \textit
{different} from $v^n(t,x) - f(x)$, but it still should satisfy the
convergence assumption~(G4).

The question in which cases such functions exist and the convergence
assumption (G4) takes places will be a subject of our future research.
\end{remark}

In the case where $\nu$ has infinite activity, that is,\ $\mu(\RR
^m)=\infty$, we will also need some additional assumptions on the
components of Eq.~\eqref{sde}.
\begin{itemize}
\item[(A3)] For each $r>0$, $\mu(\RR^m \setminus B_m(r)
)<\infty$.
\item[(A4)] For all $t\ge0$, $x\in\RR^d$, and $\theta\in\RR^m$,
\[
\bigl\llvert c^0(t,x,\theta) \bigr\rrvert\le h(t,x)g(\theta),
\]
where the functions $g,h$ are locally bounded, $g(0)=0$, and $g(\theta
)\to0$, $\theta\to0$.
\end{itemize}
\begin{remark}\label{A34rem}
Assumption (A3) means that only small jumps of $\mu$ can accumulate on
a finite interval; assumption (A4) means that small jumps of $\mu$ are
translated by Eq.~\eqref{sde} to small jumps of $X^n$. An important
and natural example of a situation where these assumptions are
satisfied is an equation
\begin{align*}
X^0(t) &= X^0(0) + \int_0^t
a^0 \bigl(s,X^0(s) \bigr) ds + \int_0^t
b^0 \bigl(s,X^0(s) \bigr)dW(s)
\\
&\quad+ \int_{0}^{t} h^0
\bigl(s,X^0(s-) \bigr)dZ(s), \quad t\ge0,
\end{align*}
driven by a L\'evy process $Z(t) = \int_0^t \int_{\RR^m}\theta\,
\wnu(d\theta,ds)$.
\end{remark}

Now we are in a position to state the main result of this section.
\begin{theorem}\label{thm-convmoments}
Assume {(A1)--(A4), (C1), (C2), (G1)--(G4)}.
Then we have the following convergence in probability:
\[
\tau^{n}\overset{\pr} {\longrightarrow}\tau^{0}, \quad n
\to\infty.
\]
\end{theorem}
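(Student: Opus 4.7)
The plan is to prove the two one-sided bounds $\limsup_{n\to\infty}\tau^n\le\tau^0$ and $\liminf_{n\to\infty}\tau^n\ge\tau^0$ in probability separately, both relying on a single oscillation property for the semimartingale $Y(t):=\varphi^0(t,X^0(t))$. First I would combine Theorem~\ref{ucp-thm} with (G4) and the local Lipschitz continuity of $\varphi^0$ in $x$ (inherited from (G1)) to obtain the joint approximation
\[
\sup_{t\in[0,T]}\bigl\llvert\varphi^n\bigl(t,X^n(t)\bigr)-\varphi^0\bigl(t,X^0(t)\bigr)\bigr\rrvert\overset{\pr}{\longrightarrow}0,\quad n\to\infty,
\]
after localising on $\{\sup_{t\in[0,T]}\llvert X^0(t)\rrvert\le R\}$, whose complement has probability $O(R^{-2})$ uniformly in $n$ by~\eqref{momentbounded}.

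The technical core is an oscillation lemma: for any $R>0$, $\eps'>0$ and $\rho>0$ there exist $\delta,\eta>0$ such that, for every stopping time $\sigma\le T-\eps'$,
\[
\pr\bigl(\sup\nolimits_{s\in[\sigma,\sigma+\delta]}Y(s)>0\mid\F_\sigma\bigr)\ge 1-\rho\quad\text{on }\bigl\{X^0(\sigma)\in B_d(R),\,Y(\sigma)\ge-\eta\bigr\}.
\]
To prove it I would use the Taylor-type expansion
\[
Y(\sigma+h)-Y(\sigma)=D_x\varphi^0\bigl(\sigma,X^0(\sigma)\bigr)^{\!\top}b^0\bigl(\sigma,X^0(\sigma)\bigr)\bigl(W(\sigma+h)-W(\sigma)\bigr)+R_h,
\]
which requires only that $\varphi^0\in C^1$ with locally Lipschitz $D_x\varphi^0$ and so bypasses the classical It\^o formula. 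By (G3) and compactness the leading Gaussian term has variance at least $c^2h$ for a positive constant $c=c(R,\eps')$, while $R_h$ collects the drift, the quadratic fluctuation of $X^0$, and the jump contributions. Using (A3) to control the large jumps (whose rate $\mu(\RR^m\setminus B_m(r))$ is finite) and (A4) to show that the cumulative small-jump effect is $o(\sqrt{h})$ uniformly on the compact set, the reflection principle for the Brownian increment delivers the required exit probability.

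Given the lemma, the two halves follow. \emph{Upper bound:} on $\{\tau^0<T\}$ apply the lemma at $\sigma=\tau^0$ (with $Y(\sigma)\ge 0$) to find $s\in(\tau^0,\tau^0+\eps)$ with $Y(s)>\delta'$; the joint approximation then yields $\varphi^n(s,X^n(s))>\delta'/2>0$ for large $n$, hence $\tau^n\le s<\tau^0+\eps$. On $\{\tau^0=T\}$ the bound is trivial since $\tau^n\le T$ by (G2). \emph{Lower bound:} argue by contradiction. On $\{\tau^n<\tau^0-\eps\}$ one has $\varphi^n(\tau^n,X^n(\tau^n))\ge 0$, whence $Y(\tau^n)\ge-\eta_n$ with $\eta_n\to 0$ in probability; applying the lemma at $\sigma=\tau^n$ produces $s\in(\tau^n,\tau^n+\delta)$ with $Y(s)>0$, forcing $\tau^0\le s<\tau^n+\delta<\tau^0-\eps+\delta$, a contradiction once $\delta<\eps$.

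The main obstacle is the oscillation lemma itself in the infinite-activity regime $\mu(\RR^m)=\infty$: one must split the Poisson integral at jump size $r$, dispose of finitely many large jumps via (A3), and use (A4) to make the cumulative contribution of small jumps negligible against the $O(\sqrt{h})$ Gaussian fluctuation. Everything else, notably the passage from convergence in probability to a.s.\ subsequential uniform convergence, the stopping-time localisation on $\{\llvert X^0\rrvert\le R\}$, and the dominated convergence applications, is standard bookkeeping.
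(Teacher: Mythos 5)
Your proposal follows essentially the same route as the paper: localize on a compact set via \eqref{momentbounded}, transfer the problem to the process $\varphi^0(t,X^0(t))$ using Theorem~\ref{ucp-thm} together with (G4), and then show that after a stopping time at which this process is near zero, the nondegenerate frozen-coefficient Brownian term singled out by (G3) pushes it above zero within a short time (reflection principle plus the strong Markov property of $W$), with the drift, coefficient-variation, and jump contributions made $o(\sqrt{h})$ by (A1)--(A4) and a truncation of the Poisson measure at jump size $r$ --- exactly the paper's decomposition into $I_1,I_2,I_3$ and $J_1,J_2,J_3$, with your ``oscillation lemma'' at a general stopping time playing the role of the paper's inline argument at $\tau^0$ and its symmetric counterpart at $\tau^n$. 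The only notable difference is that you expand $Y$ by a direct Taylor argument exploiting the Lipschitz continuity of $D_x\varphi^0$ from (G1), whereas the paper first mollifies $\varphi^0$ to a $C^{1,2}$ function $\varphi$ and applies the It\^o formula; both devices produce the same leading term and the same order of remainder.
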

\begin{proof}

Let $\eps,\delta$ be small positive numbers. We are to show that for
all $n$ large enough,
%
\begin{equation}
\label{Pabs} \pr \bigl( \bigl\llvert\tau^{n}-\tau^{0}
\bigr\rrvert>\eps \bigr)<\delta.
\end{equation}
Using estimate \eqref{momentbounded} and the Chebyshev inequality, we
obtain that for some $R>0$,
\[
\pr \Bigl(\sup_{t\in[0,T]} \bigl\llvert X^0(t) \bigr
\rrvert\ge R \Bigr)< \frac{\delta}4.
\]

Denote $\mathcal K = [0,T-\eps/2]\times B_d(R+2)$,
\begin{align*}
M &= 1 + R + C_{T,R+2} + C_T + C_{T-\eps/2,R+2} + \sup
_{(t,x)\in
\mathcal K} \bigl( \bigl\llvert a(t,x) \bigr\rrvert+ \bigl\llvert
b(t,x) \bigr\rrvert
\\
&\quad+ \bigl\llvert\partial_t \varphi^0(t,x) \bigr
\rrvert+ \bigl\llvert D_x \varphi^0(t,x) \bigr\rrvert+
\bigl\llvert b^0(t,x)^\top D_x \varphi
^0(t,x) \bigr\rrvert^{-1} \bigr),
\end{align*}
where, with some abuse of notation, $C_{T,R+2}$ is the constant from
(A2) corresponding to $T$ and $R+2$, $C_T$ is the sum of constants from
(A1) and \eqref{sol-cont}, and $C_{T-\eps/2,R+2}$ is the constant
from (G1) corresponding to $T-\eps/2$ and $R+2$.

Let $\kappa\in(0,M]$ be a number, which we will specify later. Now we
claim that there exists a function $\varphi\in C^{1,2}([0,T)\times\RR
^d)$ such that
\[
\sup_{(t,x)\in\mathcal{K}} \bigl\llvert\varphi(t,x)- \varphi
^0(t,x) \bigr\rrvert< \varkappa/2
\]
and, moreover,
\begin{align*}
&\sup_{\substack{t\in[0,T-\eps/2]\\x\in B_d(R+1)}} \bigl( \bigl\llvert\partial_t
\varphi(t,x) \bigr\rrvert+ \bigl\llvert D_x \varphi(t,x) \bigr\rrvert
+ \bigl\llvert D^2_{xx}\varphi(t,x) \bigr\rrvert+ \bigl
\llvert b^0(t,x)^\top D_x \varphi(t,x) \bigr
\rrvert^{-1} \bigr)
\\
&\quad\le C_{T-\eps/2,R+2} + \sup_{(t,x)\in\mathcal K} \bigl( \bigl\llvert
\partial_t \varphi^0(t,x) \bigr\rrvert+ \bigl\llvert
D_x\varphi^0(t,x) \bigr\rrvert
\\
&\qquad+ \bigl\llvert b^0(t,x)^\top D_x
\varphi^0(t,x) \bigr\rrvert^{-1} \bigr)\\
&\quad \le M.
\end{align*}
Indeed, we can take the convolution $\varphi(t,x) = (\varphi
^0(t,\cdot)\star\psi)(x)$ with a delta-like smooth function $\psi$,
supported on a ball of radius less than $1$.

Further, by (G4) there exists $n_1\ge1$ such that for all $n\ge n_1$,
%
\begin{equation}
\sup_{(t,x)\in\mathcal{K}} \bigl\llvert\varphi^n(t,x)- \varphi
^0(t,x) \bigr\rrvert<\varkappa/2.
\end{equation}
On the other hand, by Theorem~\ref{ucp-thm} there exists $n_2\ge1$
such that for all $n\ge n_2$,
%
\begin{equation}
\pr \biggl(\sup_{t\in[0,T]} \bigl\llvert X^n(t)-X^0(t)
\bigr\rrvert\ge\frac{\varkappa
}{M} \biggr)<\frac{\delta}4.
\end{equation}
In what follows, we consider $n\ge n_1\vee n_2$.\vadjust{\eject}

Define the stopping time
\[
\sigma^n = \inf \biggl\{t\ge0: \bigl\llvert X^n(t)-X^0(t)
\bigr\rrvert\ge\frac
{\varkappa}{M}\ \text{ or } \bigl\llvert X^0(t)
\bigr\rrvert\ge R \biggr\} \wedge T.
\]
Write
\begin{align}
\label{Pabstau} \pr \bigl( \bigl\llvert\tau^{n}- \tau^{0}
\bigr\rrvert>\eps \bigr)&\le\pr \bigl( \bigl\llvert\tau^{n}-
\tau^{0} \bigr\rrvert>\eps, \sigma^n > T-\eps/2 \bigr)
\notag
\\
&\quad+ \pr \Bigl(\sup_{t\in[0,T]} \bigl\llvert X^0(t)
\bigr\rrvert\ge R \Bigr) + \pr \biggl(\sup_{t\in[0,T]} \bigl\llvert
X^n(t)-X^0(t) \bigr\rrvert\ge\frac{\varkappa}{M} \biggr)
\notag
\\
&< \pr \bigl( \bigl\llvert\tau^{n}-\tau^{0} \bigr\rrvert>
\eps, \sigma^n > T-\eps/2 \bigr) + \frac{\delta}2.
\end{align}
For any $t\le\sigma^n$,
\[
\bigl\llvert X^n(t) \bigr\rrvert\le \bigl\llvert X^0(t)
\bigr\rrvert+ \frac{\varkappa}M<R+1,
\]
and hence,
\begin{align*}
& \bigl\llvert\varphi^n \bigl(t,X^n(t) \bigr) - \varphi
\bigl(t,X^0(t) \bigr) \bigr\rrvert
\\
&\quad\le \bigl\llvert\varphi^n \bigl(t,X^n(t) \bigr) -
\varphi \bigl(t,X^n(t) \bigr) \bigr\rrvert+ \bigl\llvert\varphi
\bigl(t,X^n(t) \bigr) - \varphi \bigl(t,X^0(t) \bigr)
\bigr\rrvert
\\
&\quad\le\varkappa+ M \bigl\llvert X^n(t)-X^0(t) \bigr
\rrvert\le2\varkappa.
\end{align*}
Now take some $\eta\in(0,\eps/2]$ whose exact value will be
specified later and write the obvious inequality
\begin{align}
\label{Ptau*} &\pr \bigl(\tau_T^{*,0}+ \eps<
\tau_T^{*,n}, \sigma^n > T-\eps/2 \bigr)\notag
\\
&\quad\le\pr \bigl(\tau^{0}< T-\eps, \tau^{0}+\eta<
\tau^{n}, \sigma^n > T-\eps/2 \bigr).
\end{align}
Assume that $\tau^{0}< T-\eps$, $\tau^{0}+\eta<\tau^{n}$, $\sigma^n
> T-\eps/2$. Then, for all $t\in\break[\tau^{0},\tau^{0}+\eta]=:
\mathcal I_\eta$,
\[
\bigl\llvert\varphi^n \bigl(s,X^0(s) \bigr) - \varphi
\bigl(s,X^0(s) \bigr) \bigr\rrvert\le2\varkappa, \qquad
\varphi^n \bigl(t,X^n(t) \bigr)<0.
\]
Therefore, in view of the inequality $\varphi(\tau^{0},X^0(\tau
^{0}))\ge0$, we obtain
%
\begin{equation}
\label{infphi} \inf_{t\in\mathcal I_\eta} \varphi \bigl(t,X^0(t)
\bigr)\ge\varphi \bigl(\tau^{0},X^0 \bigl(
\tau^{0} \bigr) \bigr)-2\varkappa.
\end{equation}

Further, we will work with the expression $\varphi(t,X^0(t))- \varphi
(\tau^{0},X^0(\tau^{0}))$ for $t\in\mathcal{I}_\eta$. For
convenience, we will abbreviate $f_s = f(s,X^0(s))$; for example,
$\varphi_s = \break\varphi(s,X^0(s))$.

Let $r>0$ be a positive number, which we will specify later, and assume
that $\nu$ does not have jumps on $\mathcal I_\eta$ greater than $r$,
that is,\ $\nu((\RR^m\setminus B_m(r))\times\mathcal I_\eta)=0$.
Write, using the It\^o formula,
\begin{align*}
&\varphi \bigl(t,X^0(t) \bigr)- \varphi \bigl(\tau^{0}, X^0\bigl(\tau^{0} \bigr) \bigr)\\
&\quad = \int_{\tau^{0}}^t L_s \varphi_s ds + \int_{\tau^{0}}^t\bigl(D_x \varphi_s, b^0_s dW(s)\bigr)+ \int_{\tau_0}^t\!\int_{B_m(r)}\varDelta_s(\theta)\, \wnu(d\theta,ds)\\
&\quad =: I_1(t)+ I_2(t) + I_3(t),
\end{align*}\vadjust{\eject}
\!where
\begin{align*}
L_t \varphi_t &= \partial_t
\varphi_t + \bigl(D_x \varphi_t,a^0_t
\bigr) + \frac{1}2 \operatorname{tr} \bigl(b^0_t
\bigl(b^0_t \bigr)^\top D^2_{xx}
\varphi_t \bigr)
\\
&\quad+ \int_{B_m(r)} \bigl(\varDelta_s(\theta) -
\bigl(D_x \varphi_s, c^0
\bigl(s,X^0(s-),\theta \bigr) \bigr) \bigr)\mu(d\theta),
\\
\varDelta_s(\theta) &= \varphi \bigl(s,X^0(s-)+c
\bigl(s,X^0(s-),\theta \bigr) \bigr) - \varphi \bigl(s,X^0(s-)
\bigr).
\end{align*}

Start with estimating $I_1(t)$. Since $t\le\sigma^n\wedge(T-\eps
/2)$ for any $t\in\mathcal I_\eta$, by the definition of $M$ and
$\sigma^n$ we have
\begin{gather*}
\biggl\llvert\partial_t \varphi_t +
\bigl(D_x \varphi_t, a^0_t \bigr)
+ \frac{1}2 \operatorname{tr} \bigl(b^0_t
\bigl(b^0_t \bigr)^\top D^2_{xx}
\varphi_t \bigr) \biggr\rrvert\le M + M^2 +
M^3\le3M^3.
\end{gather*}
Further, by (A4), for $t\in\mathcal I_\eta$ and $\theta\in B_r$,
$\llvert c(t,X(t-),\theta)\rrvert\le
h(t,X(t-))g(\theta)\le K_1 m_r$, where $K_1 = \sup_{t\in[0,T],
\llvert x\rrvert\le R} h(t,x)$ and $m_r = \sup_{\theta\in B_m(r)}
g(\theta)$.
Since\break \mbox{$m_r\to0$}, $r\to0$, we can assume that $r$ is such that $m_r\le
1/K_1$. Then, for $t\in\mathcal I_\eta$, by the Taylor formula
\begin{align*}
& \biggl\llvert\int_{B_m(r)} \bigl(\varDelta_t(
\theta) - \bigl(D_x \varphi_t, c^0
\bigl(t,X^0(t-), \theta \bigr) \bigr) \bigr)\mu(d\theta) \biggr\rrvert
\\
&\quad\le\frac{1}2 \sup_{(u,x)\in[0,T]\times B_d(R+1)} \bigl\llvert
D^2_{xx}\varphi(u,x) \bigr\rrvert\int_{B_m(r)}
\bigl\llvert c \bigl(t,X^0(t-),\theta \bigr) \bigr
\rrvert^2 \mu(d\theta)
\\
&\quad\le\frac{1}2 M^2 \bigl(1+ \bigl\llvert X(t) \bigr
\rrvert^2 \bigr)\le\frac{1}2 M^2
\bigl(1+R^2 \bigr)\le M^4.
\end{align*}
Summing up the estimates, we get
%
\begin{equation}
\label{I1(t)} \bigl\llvert I_1(t) \bigr\rrvert\le \bigl(
3M^3 + M^4 \bigr)\eta\le4 M^4 \eta.
\end{equation}

Now proceed to $I_3(t)$. By the Doob inequality, for any $a>0$,
\begin{align*}
&\pr \Bigl(\sup_{t\in\mathcal I_\eta} \bigl\llvert I_3(t) \bigr
\rrvert\ge a,\sigma_n> T-\eps/2 \Bigr)\le\pr \Bigl(\sup
_{t\in[\tau_0,(\tau
_0+\eta)\wedge\sigma_n]} \bigl\llvert I_3(t) \bigr\rrvert\ge a \Bigr)
\\
&\quad\le C a^{-2}\mathsf{E} \Biggl[ \Biggl(\int_{0}^{T}
\int_{B_m(r)} \varDelta_s(\theta) 1_{[\tau_0,(\tau_0+\eta)\wedge
\sigma_n]}(s)
\wnu(d\theta,ds) \Biggr)^2 \Biggr]
\\
&\quad= C a^{-2}\int_{0}^{T} \int
_{B_m(r)} \mathsf{E} \bigl[\varDelta_s(
\theta)^2 1_{[\tau_0,(\tau
_0+\eta)\wedge\sigma_n]}(s) \bigr] \mu(d\theta)ds
\\
&\quad\le C a^{-2} M^2 \int_{0}^{T}
\int_{B_m(r)} \mathsf{E} \bigl[ \bigl\llvert c
\bigl(s,X^0(s-), \theta \bigr) \bigr\rrvert^2
1_{[\tau_0,(\tau_0+\eta )\wedge
\sigma_n]}(s) \bigr] \mu(d\theta)ds
\\
&\quad\le C a^{-2} M^2 \int_{0}^{T}
\int_{B_m(r)} \mathsf{E} \bigl[K_1^2m_r^2
1_{[\tau_0,(\tau_0+\eta
)\wedge\sigma_n]}(s) \bigr] \mu(d\theta)ds \le K_2 a^{-2}
m_r^2 \eta
\end{align*}
with some constant $K_2$. Further, we fix $a = \delta^2\eta^{1/2}$
and some $r>0$ such that $m_r^2\le\delta^5/(16K_2)$ and $m_r\le
1/K_1$. Then
\begin{gather*}
\pr \Bigl(\sup_{t\in\mathcal I_\eta} \bigl\llvert I_3(t) \bigr
\rrvert\ge\delta^2\eta^{1/2}, \sigma_n> T-
\eps/2 \Bigr)\le\frac{\delta}{16}.
\end{gather*}

Hence, in view of \eqref{Ptau*}--\eqref{I1(t)}, we obtain
\begin{align*}
&\pr \bigl(\tau^{0}+\eps<\tau^{n}, \sigma^n > T-\eps/2 \bigr)\\
&\quad\le\pr \Bigl(\inf_{t\in\mathcal I_\eta}I_2(t)\ge-2\varkappa-4M^4\eta- \delta^2\eta^{1/2},\sigma^n > T-\eps/2 \Bigr)\\
&\qquad+ \pr \Bigl(\sup_{t\in\mathcal I_\eta} \bigl\llvert I_3(t)\bigr\rrvert\ge\delta^2\eta^{1/2}, \sigma_n>T-\eps/2 \Bigr) + \pr \bigl(\nu \bigl( \bigl(\RR^m\setminus B_m(r) \bigr)\times\mathcal I_\eta \bigr) > 0 \bigr)\\
&\quad\le\pr \Bigl(\inf_{t\in\mathcal I_\eta}I_2(t)\ge-2\varkappa-4M^4\eta- \delta^2\eta^{1/2},\sigma^n > T-\eps/2 \Bigr)\\
&\qquad+ {\eta}\,\mu \bigl(\RR^m\setminus B_m(r) \bigr)+ \frac{\delta}{16}.
\end{align*}
Assume further that $\eta\le\eta_1:=\delta\, \mu(\RR^m)/16$ (not
yet fixing its exact value).
Setting $\varkappa= (\eta M^4)\wedge M$, we get
\begin{align}
\label{ptau*0} &\pr \bigl(\tau^{0}+\eps<\tau^{n},
\sigma^n > T-\eps/2 \bigr)\notag
\\
&\quad\le\pr \Bigl(\inf_{t\in\mathcal I_\eta}I_2(t)\ge-5\eta
M^4 -\delta^2\eta^{1/2},
\sigma^n >T-\eps/2 \Bigr) + \frac
{\delta}{8}.
\end{align}

Write $I_2(t) = J_1(t) + J_2(t) + J_3(t)$, where
\begin{align*}
J_1(t) &= \int_{\tau^{0}}^t
\bigl(D_x \varphi_s-D_x\varphi_{\tau
^{0}},
b^0_s\, dW(s) \bigr),
\\
J_2(t) &= \int_{\tau^{0}}^t
\bigl(D_x \varphi_{\tau^{0}}, \bigl(b^0_s-b^0_{\tau^{0}}
\bigr)dW(s) \bigr),
\\
J_3(t) &= \bigl(D_x \varphi_{\tau^{0}},b^0_{\tau^{0}}
\bigl(W(t)-W \bigl(\tau^{0} \bigr) \bigr) \bigr) =
\bigl(u_{\tau
^{0}},W(t)-W \bigl(\tau^{0} \bigr) \bigr);
\\
u_{s} &= b^0 \bigl(s,X^0(s)
\bigr)^\top D_x\varphi \bigl(s,X^0(s) \bigr).
\end{align*}

Taking into account that $(s,X^0(s))\in\mathcal K$ for $s\le\sigma
^n$, we estimate with the help of Doob's inequality
\begin{align*}
\mathsf{E} \Bigl[\sup_{t\in\mathcal I_\eta}J_1(t)^2
\mathbf {1}_{\sigma ^n>T-\epsilon/2} \Bigr]&\le\mathsf{E} \Bigl[\sup_{t\in[\tau^{0},(\tau^{0}+\eta )\wedge\sigma^n]}J_1(t)^2
\Bigr]
\\
&\le C\mathsf{E} \Biggl[ \Biggl(\int_{\tau^{0}}^{(\tau^{0}+\eta
)\wedge\sigma ^n}
\bigl(D_x \varphi_s-D_x\varphi_{\tau^{0}},
b^0_s\, dW(s) \bigr) \Biggr)^2 \Biggr]
\\
&\le C \mathsf{E} \Biggl[\int_{\tau^{0}}^{(\tau^{0}+\eta)\wedge
\sigma ^n}\llvert
D_x \varphi_s-D_x \varphi_{\tau^{0}}
\rrvert^2 \bigl\llvert b^0_s \bigr
\rrvert^2ds \Biggr]
\\
&\le C M^3 \mathsf{E} \Biggl[\int_{\tau^{0}}^{\tau^{0}+\eta}
\bigl\llvert X^0(s) - X^0 \bigl(\tau^{0}
\bigr) \bigr\rrvert^2ds \Biggr]
\\
&\le C M^4 \bigl(1+ \bigl\llvert X^0(0) \bigr
\rrvert^2 \bigr)\eta^2\le C M^4
\bigl(1+R^2 \bigr)\eta^2\le CM^6
\eta^2.
\end{align*}
Similarly, using (A2), we get
\[
\mathsf{E} \Bigl[\sup_{t\in\mathcal I_\eta}J_2(t)^2
\mathbf {1}_{\sigma ^n>T-\epsilon/2} \Bigr]\le C M^6 \eta^2.
\]
The Chebyshev inequality yields
\[
\pr \Bigl(\sup_{t\in\mathcal I_\eta} \bigl( \bigl\llvert J_1(t)
\bigr\rrvert+ \bigl\llvert J_2(t) \bigr\rrvert \bigr)\ge
\eta^{2/3}, \sigma^n>T-\eps/2 \Bigr)\le K_3
M^6\eta^{2/3}
\]
with certain constant $K_3$. Assume further that
\[
\eta\le\eta_2:= \biggl(\frac{\delta}{16 K_3 M^6} \biggr)^{3/2} ,
\]
in which case the right-hand side of the last inequality does not
exceed $\delta/16$, and that
\[
\eta\le\eta_3 := \frac{1}{125M^{9}},
\]
so that $\eta^{2/3}\ge5\eta M^3$.
Hence, in view of \eqref{ptau*0}, we obtain
\begin{align}
\label{ptau*01} &\pr \bigl(\tau^{0}+\eps<\tau^{n},
\sigma^n > T-\eps/2 \bigr)\notag
\\
&\quad\le\pr \Bigl(\inf_{t\in\mathcal I_\eta} J_3(t) \ge-5\eta
M^3 - \eta^{2/3}-\delta^2
\eta^{1/2},\sigma^n > T-\eps \Bigr)+ \frac{3\delta}{16}\notag
\\
&\quad\le\pr \Bigl(\inf_{t\in\mathcal I_\eta} J_3(t) \ge- 2\eta
^{2/3}-\delta^2\eta^{1/2}, \bigl(
\tau^{0},X^0 \bigl(\tau^{0} \bigr) \bigr)\in
\mathcal K \Bigr)+ \frac{3\delta}{16}.
\end{align}
Further, due to the strong Markov property of $W$,
\begin{align*}
&\pr \Bigl(\inf_{t\in\mathcal I_\eta} J_3(t) \ge- 2\eta
^{2/3}-\delta^2\eta^{1/2}, \bigl(
\tau^{0},X^0 \bigl(\tau^{0} \bigr) \bigr)\in
\mathcal K \Bigr)
\\
&\quad\! =\mathsf{E} \Bigl[\mathbf{1}_{\mathcal K} \bigl(\tau
^{0},X^0 \bigl(\tau^{0} \bigr) \bigr) \pr
\Bigl(\inf_{t\in
\mathcal I_\eta}J_3(t) \ge- 2\eta^{2/3}-
\delta^2\eta^{1/2} \mid F_{\tau^0} \Bigr) \Bigr]
\\
&\quad\! = \mathsf{E} \Bigl[\mathbf{1}_{\mathcal K} \bigl(\tau
^{0},X^0 \bigl(\tau^{0} \bigr) \bigr)
\\
&\qquad\! \times\pr \Bigl(\inf_{z\in[0,\eta]} \bigl(u(s,x),W(s\, {+}
\,z)\,{-}\,W(s) \bigr)\ge{-}\, 2\eta^{2/3}\,{-}\,\delta^2
\eta^{1/2} \Bigr)\,|_{(s,x)= (\tau^{0},X^0(\tau^{0}))} \Bigr],
\end{align*}
where $u(s,x) = b^0(s,x)^\top D_x\varphi(s,x)$. Observe now that $\{
(u(s,x),W(z+s)-W(s)),\break z\ge0\}$ is a standard Wiener process multiplied
by $\llvert u(s,x)\rrvert$. Therefore,
\begin{align*}
&\pr \Bigl(\inf_{z\in[0,\eta]} \bigl(u(s,x),W(s+z)-W(s) \bigr) \ge-2
\eta^{2/3} -\delta^2\eta^{1/2} \Bigr)
\\
&\quad= 1- 2\pr \bigl( \bigl(u(s,x),W(s+\eta)-W(s) \bigr)< -2\eta
^{2/3} -\delta^2\eta^{1/2} \bigr)
\\
&\quad=1- 2\varPhi \biggl( -\frac{2\eta^{2/3}+\varDelta^2\eta
^{1/2}}{\llvert u(s,x)\rrvert\eta^{1/2} } \biggr) = 1- 2\varPhi \biggl( -
\frac{2\eta^{1/6}+\delta^2}{\llvert u(s,x)\rrvert} \biggr),
\end{align*}
where $\varPhi$ is the standard normal distribution function.
Thus,\vadjust{\eject}
\begin{align*}
&\pr \Bigl(\inf_{t\in\mathcal I_\eta} J_3(t) \ge- 2\eta
^{2/3}-\delta^2\eta^{1/2}, \bigl(
\tau^{0},X^0 \bigl(\tau^{0} \bigr) \bigr)\in
\mathcal K \Bigr)
\\
&\quad\le\mathsf{E} \biggl[\mathbf{1}_{\mathcal K} \bigl(\tau
^{0},X^0 \bigl(\tau^{0} \bigr) \bigr) \biggl(1-
2\varPhi \biggl( - \frac{2\eta ^{1/6}+\delta^2}{\llvert u(\tau^{0},X^0(\tau
^{0}))\rrvert } \biggr) \biggr) \biggr]
\\
&\quad\le1- 2\varPhi \bigl( -M \bigl(2\eta^{1/6}+\delta^2
\bigr) \bigr)\le\frac{M\sqrt2}{\sqrt{\pi}} \bigl(2\eta ^{1/6}+\delta
^2 \bigr).
\end{align*}
Note that the definition of $M$ does not depend on $\delta$. Thus, we
can assume without loss of generality that $\delta\le\sqrt{\pi
}/(32M\sqrt{2})$. Finally, if
\[
\eta\le\eta_4 := \biggl(\frac{\delta\sqrt{\pi}}{64\sqrt
{2}} \biggr)^6,
\]
then
%
\begin{equation}
\label{PinfJ3} \pr \Bigl(\inf_{t\in\mathcal I_\eta} J_3(t) \ge- 2
\eta^{2/3}-\varDelta^2\eta^{-1/2}, \bigl(
\tau^{0},X^0 \bigl(\tau^{0} \bigr) \bigr)\in
\mathcal K \Bigr)\le\frac{\delta}{16}.
\end{equation}
Now we can fix $\eta= \min\{\eps/2,\eta_1,\eta_2,\eta_3,\eta_4\}
$, making all previous estimates to hold. Combining \eqref{ptau*01}
with \eqref{PinfJ3}, we arrive at
\begin{gather*}
\pr \bigl(\tau^{0}+\eps<\tau^{n}, \sigma^n >
T-\eps/2 \bigr)\le\frac{\delta}{4}.
\end{gather*}
Similarly,
\begin{gather*}
\pr \bigl(\tau^{n}+\eps<\tau^{0}, \sigma^n >
T-\eps/2 \bigr)\le\frac{\delta}{4},
\end{gather*}
and hence
\begin{gather*}
\pr \bigl( \bigl\llvert\tau^{n}-\tau^{0} \bigr\rrvert>
\eps, \sigma^n > T-\eps/2 \bigr)\le\frac{\delta}{2}.
\end{gather*}
Plugging this estimate into \eqref{Pabstau}, we arrive at the desired
inequality \eqref{Pabs}.
\end{proof}
\begin{remark}
It is easy to modify the proof for the case where \eqref{diffusion>0}
holds for all $(t,x)\in\mathcal G^0:= \{(t,x)\in[0,T)\times\RR
^d: \varphi(t,x)=0 \}$. Indeed, the continuity would imply that
\eqref{diffusion>0} holds in some neighborhood of $\mathcal G^0$, which is
sufficient for the argument.
\end{remark}
\begin{remark}
As we have already mentioned, assumptions (A3) and (A4) are not needed
in the case $\mu(\RR^m)<\infty$. Indeed, we can set $r=0$ in the
previous argument and skip the estimation of $I_3(t)$. Nevertheless,
these assumptions does not seem very restrictive, as we pointed out in
Remark~\ref{A34rem}.
\end{remark}

\subsection{Convergence of hitting times for infinite horizon}\label{sec5.1}
Here we extend the results of the previous subsection to the case of
infinite time horizon.
Let, as before, the stopping times $\tau^n$, $n\ge0$, be given by
\eqref{taun}. We impose the following assumptions.
\begin{itemize}
\item[(H1)] $\varphi^0\in C^{1}([0,\infty)\times\RR^d)$, and
$D_x\varphi^0$ is locally Lipschitz continuous in $x$, that is,\ for
all $T>0$, $R>0$, $t\in[0,T]$, and $x,y\in B_d(R)$,
\[
\bigl\llvert D_x \varphi^0(t,x) - D_x
\varphi^0(t,y) \bigr\rrvert\le C_{T,R}\llvert x-y\rrvert.
\]
\item[(H2)] $\tau^0 <\infty$ a.s.
\item[(H3)] For all $t\ge0$ and $x\in\RR^d$,
\begin{equation*}
\bigl\llvert D_x \varphi^0(t,x)b^0(t,x)^\top
\bigr\rrvert> 0.
\end{equation*}
\item[(H4)] For all $t\ge0$ and $R>0$,
\[
\sup_{(s,x)\in[0,t]\times B_d(R)} \bigl\llvert\varphi^n(t,x)- \varphi
^0(t,x) \bigr\rrvert\to0, \quad n\to\infty.
\]
\end{itemize}
\begin{theorem}\label{thm-con-infty}
Assume {(A1), (A2), (C1), (C2)}, {(H1)--(H4)}.
Then we have the following convergence in probability:
\[
\tau^{n}\overset{\pr} {\longrightarrow}\tau^{0}, \quad n
\to\infty.
\]
\end{theorem}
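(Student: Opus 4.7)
The plan is to reduce the infinite-horizon statement to the finite-horizon Theorem \ref{thm-convmoments} by using (H2) to replace the role played by the terminal boundary condition (G2). Fix $\eps,\delta>0$. Since $\tau^0<\infty$ a.s.\ by (H2), there exists $T>\eps$ with $\pr(\tau^0>T-\eps)<\delta/4$, and by \eqref{momentbounded} and the Chebyshev inequality we may pick $R>0$ so that $\pr(\sup_{t\in[0,T]}|X^0(t)|\ge R)<\delta/4$. We then restrict attention to the high-probability event on which $\tau^0\le T-\eps$ and $X^0$ stays in $B_d(R)$ up to time~$T$.

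On this event the argument of Theorem \ref{thm-convmoments} transfers almost verbatim, because (H1), (H3), (H4) are the infinite-horizon analogues of (G1), (G3), (G4). The only place where (G2) is actually invoked in the finite-horizon proof is to pass from $\tau^0+\eps<\tau^n$ to $\tau^0<T-\eps$ via $\tau^n\le T$; in our setting I would replace that step by the direct splitting
\[
\pr \bigl(\tau^0+\eps<\tau^n,\sigma^n>T-\eps/2 \bigr) \le \pr \bigl(\tau^0<T-\eps,\tau^0+\eta<\tau^n,\sigma^n>T-\eps/2 \bigr) + \pr \bigl(\tau^0>T-\eps \bigr),
\]
for $\eta\le\eps$, with the last term already bounded by $\delta/4$ by the choice of $T$. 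The mollification of $\varphi^0$ by a smooth kernel, the localising time $\sigma^n$ built from Theorem \ref{ucp-thm}, the It\^o decomposition of $\varphi(t,X^0(t))-\varphi(\tau^0,X^0(\tau^0))$ on $\mathcal I_\eta=[\tau^0,\tau^0+\eta]$ into $I_1+I_2+I_3$, the resulting estimates on $I_1,I_2,I_3$, and the final reduction to the Gaussian term $J_3$ via the strong Markov property of $W$ all then apply without change, since the crucial geometric bound $|b^0(t,x)^\top D_x\varphi^0(t,x)|>0$ is supplied by (H3) and is locally bounded below on the compact set $[0,T-\eps/2]\times B_d(R+2)$.

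For the opposite direction $\tau^n+\eps<\tau^0$, the same splitting forces $\tau^n\le T-2\eps$ on the good event, and the symmetric argument (working now on $[\tau^n,\tau^n+\eta]$ and using (H4) to transfer the sign information between $\varphi^n$ and $\varphi^0$) yields the matching bound $\pr(\tau^n+\eps<\tau^0,\sigma^n>T-\eps/2)\le\delta/4$. Combining both directions with the probability lost through localisation completes the proof of $\pr(|\tau^n-\tau^0|>\eps)<\delta$ for all sufficiently large $n$.

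The main subtlety I anticipate is the jump contribution $I_3$ when $\mu(\RR^m)=\infty$: the finite-horizon proof invoked (A3) and (A4) at that point, yet these are absent from the hypotheses of Theorem \ref{thm-con-infty}. To make the argument rigorous in the infinite-activity setting one would tacitly add (A3)--(A4) as in Remark \ref{A34rem}; in the finite-activity case $\mu(\RR^m)<\infty$ these assumptions are unnecessary, since one may set $r=0$ and discard $I_3$ altogether, so the reduction described above goes through without additional hypotheses.
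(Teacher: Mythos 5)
Your proof is correct and rests on the same underlying idea as the paper's --- use (H2) to find a finite $T$ beyond which $\tau^0$ rarely lives, then fall back on the finite-horizon machinery --- but the implementation differs. The paper does not re-run the proof of Theorem~\ref{thm-convmoments}: it defines truncated functions $\tilde\varphi^n(t,x)=\varphi^n(t,x)\mathbf{1}_{[0,T)}(t)$ and capped times $\tau^n_T=\tau^n\wedge T$, observes that the $\tilde\varphi^n$ satisfy (G1)--(G3) (in particular (G2), since $\tilde\varphi^n(T,\cdot)=0$) and that $\tau^n_T$ is exactly the hitting time associated with $\tilde\varphi^n$, and then applies Theorem~\ref{thm-convmoments} as a black box; the conclusion follows from $\pr(\llvert\tau^n-\tau^0\rrvert>\eps)\le\pr(\llvert\tau^n_T-\tau^0_T\rrvert>\eps)+\pr(\tau^0>T-1)$. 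Your route instead keeps the original $\varphi^n$ and re-derives the finite-horizon argument, substituting the splitting $\pr(\tau^0+\eps<\tau^n,\dots)\le\pr(\tau^0<T-\eps,\dots)+\pr(\tau^0>T-\eps)$ for the one step where (G2) forced $\tau^n\le T$. Both are valid; the paper's truncation is shorter and avoids auditing the finite-horizon proof line by line, while yours makes explicit exactly where (G2) enters, which is a useful sanity check. You also correctly flag that Theorem~\ref{thm-convmoments} requires (A3)--(A4), which are absent from the hypotheses of Theorem~\ref{thm-con-infty}; the paper's own proof has the same issue, since it invokes Theorem~\ref{thm-convmoments} without them, so either they must be added tacitly or one restricts to finite activity, exactly as you say.
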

\begin{proof}
Fix arbitrary $\eps\in(0,1)$ and $\delta>0$.
Since $\tau^{0}<\infty$ a.s.,
$\pr(\tau^{0}>T-1)\le\delta$ for some $T>1$. For $n\ge0$, $t\in
[0,T]$, and $x\in\RR^d$, define \ $\tilde\varphi^n(t,x) = \varphi
^n(t,x)\mathbf{1}_{[0,T)}(t)$, $\tau^{n}_T = \tau^{n}\wedge T$. Then
the functions $\tilde\varphi^n$, $n\ge0$, satisfy (G1)--(G3) and $
\tau_T^n = \inf \{t\ge0: \tilde\varphi^n(t,X^n(t))\ge0
\}$.
Therefore, in view of Theorem~\ref{thm-convmoments},
\[
\pr \bigl( \bigl|\tau^{n}_T-\tau^{*,0}_T
\bigr|> \eps \bigr)\to0, \quad n\to\infty.
\]
We estimate
\begin{align*}
\pr \bigl( \bigl\llvert\tau^{n}-\tau^{0} \bigr\rrvert>
\eps \bigr)&\le\pr \bigl( \bigl\llvert\tau^{n}_T-
\tau^{0}_T \bigr\rrvert>\eps \bigr) + \pr \bigl(
\tau^{0}>T-1 \bigr)
\\
&\le\pr \bigl( \bigl\llvert\tau^{n}_T-
\tau^{0}_T \bigr\rrvert>\eps \bigr) + \delta.
\end{align*}
Hence,
\begin{gather*}
\varlimsup_{n\to\infty} \pr \bigl( \bigl\llvert\tau^{n}- \tau
^{0} \bigr\rrvert> \eps \bigr)\le\delta.
\end{gather*}
Letting $\delta\to0$, we arrive at the desired convergence.
\end{proof}

\begin{example}
Let $d=k=m=1$ and for all $t\ge0$, $x,\theta\in\RR$, $a^n(t,x) =
a^n$, $b^n(t,x)= b^n$, $c^n(t,x,\theta) = c^n \theta$, where
$a^n,b^n,c^n\in\RR$.
Then we have a sequence of L\'evy processes
\[
X^n(t) = X^n(0) + a^n t + b^n
W(t) + c^n \int_0^t \int
_{\RR}\theta\, \wnu(ds,d\theta).
\]
Consider the following times:
\[
\tau^{n} = \inf \bigl\{t\ge0: X^n(t) \ge
h^n(t) \bigr\}\wedge T, \quad n\ge0,
\]
of crossing some curve $h\in C^1([0,T))$.

Assume that $a^n\to a^0$, $b^n\to b^0\neq0$, $c^n\to c^0$, and
$X^n(0)\to X^0(0)$ as $n\to\infty$ and, for any $t\in[0,T)$, $\sup_{s\in[0,t]} \llvert h^n(t)-h^0(t)\rrvert\to0$ as $n\to\infty$.
Then $\tau
^{n} \overset{\pr}{\longrightarrow} \tau^{0}$, $n\to\infty$.
Indeed, setting $\varphi^n(t,x) = (h^n(t)-x)\mathbf{1}_{[0,T)}(t)$,
we can check that all assumptions of Theorem~\ref{thm-convmoments} are
in force.
\end{example}
\begin{example}
Let $d=k=m=1$. Suppose that the coefficients $a^n$, $b^n$, $c^n$
satisfy (A1), (A2) and that the convergence (C1)--(C3) takes place.
Assume that $b^0(t,x)>0$ for all $t\ge0$ and $x\in\RR$. Define
\[
\tau^{n} = \inf \bigl\{t\ge0: X^n(t)\notin
\bigl(l^n,r^n \bigr) \bigr\}, \quad n\ge0.
\]
It is not hard to check that, due to the nondegeneracy of $b^0$, $\tau
^0<\infty$ a.s. Assume that $l^n\to l^0$, $r^n\to r^0$, $n\to\infty$.
Then, setting $\varphi^n(t,x) = (x-l^n)(r^n-x)$ and using Theorem~\ref{thm-con-infty},
we get the convergence $\tau^{n} \overset{\pr}{\longrightarrow} \tau^{0}$, $n\to\infty$.
\end{example}

\section*{Acknowledgments}

The author would like to thank the anonymous referee whose remarks led
to a substantial improvement of the manuscript.

%

%
\end{document}